\theoremstyle{plain}
\newtheorem*{theorem}{Theorem}
\newtheorem{lemma}{Lemma}
\newtheorem{proposition}{Proposition}
\newcommand\im{\operatorname{im}}
\newcommand\Span{\operatorname{Span}}
\begin{document}

\title[Nontrivial central representation]{A class of nilpotent Lie algebras whose center acts nontrivially in cohomology}

\author{Grant Cairns \and Barry Jessup \and Yuri Nikolayevsky}
\address{Department of Mathematics and Statistics, La Trobe University, Melbourne, 3086, Australia}
   \email{G.Cairns@latrobe.edu.au}
\address{Department of Mathematics and Statistics, University of Ottawa, Ottawa, K1N 6N5, Canada} 
\email{Barry.Jessup@uottawa.ca}
\address{Department of Mathematics and Statistics, La Trobe University, Melbourne, 3086, Australia}
   \email{Y.Nikolayevsky@latrobe.edu.au}

\subjclass[2010] {17B56, 17B30}


\thanks{This research was supported in part by NSERC and in part,by ARC Discovery grant DP130103485}

\begin{abstract}
We show that the central representation is nontrivial for all one-dimensional central extensions of nilpotent Lie algebras possessing a codimension one abelian ideal. 
\end{abstract}

\maketitle
\section{Introduction}
\label{s:intro}

We consider finite dimensional Lie algebras $L$ over $\mathbb{R}$. The cohomology ring $H^*(L)$ with trivial coefficients is naturally a module over the centre $Z$ of $L$; for each $z\in Z$ and $[a]\in H^k(L)$, the class $z\cdot [a]$ is defined as $[i_z a]\in H^{k-1}(L)$, where $i_z$ denotes the interior product by $z$. This action of $Z$ on $H^*(L)$ extends to an action of the exterior algebra $\Lambda Z$ called the \emph{central representation}. In \cite{CJ1} we conjectured that the central representation is nontrivial for all nilpotent Lie algebras. This conjecture was established in \cite{CJ1} for several classes of algebras, and in \cite{SP}, for $2$-step nilpotent algebras (on the other hand, a non-nilpotent Lie algebra for which the central representation is trivial was given in \cite{CJ1}). Examples where the central representation is faithful were given in \cite{CJ1, CJ2}. The free $2$-step nilpotent Lie algebras on more than two generators provide examples where the central representation is not faithful \cite{CJ2}. The aim of this present paper is to establish the above conjecture for a class of nilpotent algebras of higher nilpotency obtained by a natural extension of abelian algebras.

There are two classic inductive constructions for building nilpotent Lie algebras; each uses a nilpotent Lie algebra $L$ to build a nilpotent Lie algebra $L'$ with $\dim L'=\dim L+1$. In the first construction, as studied by Dixmier \cite{Dix} for example, one takes a nilpotent derivation $D$ of $L$,  introduces a new generator $u$ and defines a Lie algebra structure on $L'=L\oplus \mathbb{R}u$ having $L$ as an ideal, by setting $[u,x]:=D x$ for all $x\in L$. In the other construction, one obtains $L'$ as a central extension. To do this, choose a closed $2$-form $\Omega$ in $\Lambda^2L^*$, introduce a new generator $z$ and set $L'=L\oplus \mathbb{R}z$ where $z$ is taken to be central element with $L'/\mathbb{R}z\cong L$ and $[x,y]:=[x,y]_L+\Omega(x,y)z$ for all $x,y\in L$. The two constructions may be regarded as building $L'$ from the ``outside'' and the ``inside'' respectively; obviously, every nilpotent Lie algebra can be obtained from an abelian algebra by repeated applications of either of the above constructions. In this paper we examine Lie algebras that be built from abelian algebras by employing one construction of each type. Note that the resulting class of algebras does not depend on which construction we apply first. We also note that the repeated double extension construction, starting from an abelian algebra, naturally appears in the classification of bi-invariant pseudo-Riemannian homogeneous manifolds \cite{MR}.

We prove the following.
\begin{theorem}
The central representation is nontrivial for all one-dimensional central extensions of nilpotent Lie algebras possessing a codimension one abelian ideal.
\end{theorem}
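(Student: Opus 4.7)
Let $\Omega \in \Lambda^2 L^*$ denote the closed $L$-cocycle defining the central extension, so that $L' = L \oplus \mathbb{R}z$ with $[x,y]_{L'} = [x,y]_L + \Omega(x,y) z$, and fix the decomposition $L = A \oplus \mathbb{R}u$ where $A$ is the codimension one abelian ideal. The first step is the Hochschild--Serre spectral sequence associated to the central ideal $\mathbb{R}z \subset L'$, which collapses at the $E_3$-page and gives the Gysin long exact sequence
\[ \cdots \to H^{k-2}(L) \xrightarrow{[\Omega]\wedge} H^k(L) \xrightarrow{p^*} H^k(L') \xrightarrow{\iota_z} H^{k-1}(L) \xrightarrow{[\Omega]\wedge} H^{k+1}(L) \to \cdots, \]
in which $\iota_z$ is induced by the contraction with $z$ and $p^*$ comes from the projection $L' \to L$. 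Since the central-representation action is $z \cdot [\alpha] = p^*(\iota_z [\alpha])$, exactness shows that $z$ acts nontrivially on $H^*(L')$ if and only if the cochain complex $(H^*(L), [\Omega]\wedge)$ has nonzero cohomology.

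The remaining task is to prove that this complex is not acyclic for any $\Omega$. The case $[\Omega]=0$ is immediate since $H^0(L)=\mathbb{R}\neq 0$, so assume $[\Omega]\neq 0$. The derivation $D := \operatorname{ad}(u)|_A$ is nilpotent, and the Hochschild--Serre spectral sequence of $0 \to A \to L \to \mathbb{R}u \to 0$ degenerates at $E_2$ and yields
\[ H^*(L) \;\cong\; \ker(D^T) \;\oplus\; u^* \wedge \operatorname{coker}(D^T), \]
where $D^T$ is extended as a derivation on $\Lambda^*A^*$. Writing $\Omega = \beta + u^*\wedge\bar\omega$ with $\beta\in\Lambda^2 A^*$, $\bar\omega\in A^*$ and cocycle condition $D^T\beta=0$, multiplication by $[\Omega]$ becomes block-triangular in this splitting: $[\beta]\wedge$ acts on each summand separately and $[\bar\omega]\wedge$ realises the off-diagonal map from $\ker(D^T)$ into $u^*\wedge\operatorname{coker}(D^T)$.

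The Poincar\'e polynomial of $L$ factors as $P_L(t) = (1+t)\prod_\alpha(1+t+\cdots+t^{k_\alpha})$ where the $k_\alpha$ are the sizes of the Jordan blocks of $D$. When $P_L(i)\neq 0$ (equivalently, no $k_\alpha\equiv 3 \pmod 4$), one of the even/odd Euler characteristics of the degree-$2$ complex $(H^*(L), [\Omega]\wedge)$ is nonzero and acyclicity is immediately excluded. The hard part is the remaining case, where the Euler characteristic obstruction vanishes; there I plan to exhibit an explicit nonzero class $[\phi]\in\ker([\Omega]\wedge)\setminus\im([\Omega]\wedge)$ built from $u^*$ together with generators of $\ker D$ sitting at the ``tops'' of the relevant Jordan blocks, so that $[\beta]\wedge[\phi]$ is forced to land in the image of $D^T$ and $[\bar\omega]\wedge[\phi]$ vanishes in $\operatorname{coker}(D^T)$. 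Verifying simultaneously that $[\Omega]\wedge[\phi]=0$ and that $[\phi]\notin\im([\Omega]\wedge)$---after possibly correcting $\Omega$ by a coboundary of $L$ to normalise the form of $\bar\omega$---will require careful bookkeeping of how $[\beta]\wedge$ and $[\bar\omega]\wedge$ interact with the Jordan decomposition of $D^T$ on $\Lambda^*A^*$.
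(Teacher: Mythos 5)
Your reduction via the Gysin sequence is sound and is in essence the same reduction the paper performs by hand: triviality of the $z$-action on $H^*(L')$ is equivalent to the inclusion $\ker\bigl(\cup[\Omega]\colon H^k(L)\to H^{k+2}(L)\bigr)\subseteq \im\bigl(\cup[\Omega]\colon H^{k-2}(L)\to H^k(L)\bigr)$ for every $k$, and your description $H^*(L)\cong\ker (D^T)\oplus u^*\wedge\operatorname{coker}(D^T)$ matches the data $(\theta,\Omega,\epsilon)$ of Proposition~\ref{p:suffice}, whose conditions (A)--(D) encode exactly this kernel/image statement. Two things go wrong after that, however. First, $(H^*(L),\cup[\Omega])$ is \emph{not} a cochain complex: $[\Omega]^2$ need not vanish in $H^4(L)$ (already for $L$ abelian and $\Omega$ of rank $\ge 2$), so ``acyclicity'' and ``Euler characteristic of the complex'' are not the right notions. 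Triviality of the action is only the one-sided inclusion $\ker\subseteq\im$, and for a degree-two map that is not square-zero this inclusion does not force the alternating sum of dimensions to vanish: the chain $0\to\mathbb{R}\to\mathbb{R}^2\to\mathbb{R}^2\to 0$, with the first map injective and the second bijective, satisfies $\ker\subseteq\im$ in every degree yet has alternating sum $1$. So the dichotomy on $P_L(i)\ne 0$ does not actually dispose of any cases as written.

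Second, and decisively, the entire difficulty of the theorem sits in the step you defer (``I plan to exhibit an explicit nonzero class\dots will require careful bookkeeping''). That step is the content of Proposition~\ref{p:suffice} and all of Section~\ref{s:proof}: one needs (i) a simultaneous normal form for the pair $(\theta,\Omega)$ on the support of $\Omega$ --- a nilpotent Hamiltonian matrix against a symplectic form, Lemma~\ref{l:can}; (ii) the multilinear hard Lefschetz property (Lemma~\ref{l:Lefschetz}), both to certify that the candidate class is not in $\im\mu_\Omega$ (Lemma~\ref{l:Omegabeta}) and to solve $\epsilon\beta+\Omega\alpha=\theta\gamma$; and (iii) a genuinely delicate case analysis on the parity of the first exponent $M$ with $\theta^M\epsilon$ in the support of the candidate, including a separate ad hoc construction (Case~2.3) in which $\beta$ is not a monomial built from Jordan-block tops at all but has the form $(z_2z_1-\Sigma)\lambda+z_1\nu$. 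Your instinct to build $\phi$ from generators at the tops of Jordan blocks is the right starting point (it is the paper's $\beta_P$), but it is exactly the odd-$M$ cases where that naive choice fails and where most of the work lies. As it stands the proposal is a correct set-up plus a plan, not a proof.
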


Note that the Theorem remains valid for Lie algebras over $\mathbb{C}$ (with no changes to the proof). We also note that the non-triviality of the central representation for Lie algebras obtained from an abelian algebra by just one extension (of either type) trivially follows.

\section{Preliminaries}
\label{s:pre}

\subsection{Linear-algebraic reduction} 
\label{ss:reform}

Consider a finite dimensional vector space $V$ over $\mathbb{R}$ and a nilpotent linear map $\theta: V \to V$. In order to simplify the notation, we write $\omega_1 \omega_2$ for $\omega_1 \wedge \omega_2$ throughout the paper. Extend $\theta$ to a derivation of $\Lambda V$ which we still denote $\theta$ (so that $\theta(\omega_1 \omega_2)=\theta(\omega_1) \omega_2+\omega_1 \theta(\omega_2)$, for all $\omega_1, \omega_2 \in \Lambda V$). For $\Omega \in \Lambda^2 V$, denote $\mu_{\Omega}:\Lambda V\to \Lambda V$ the right multiplication by $\Omega$; that is, $\mu_{\Omega}:\omega \mapsto \omega \Omega$.

The proof of the Theorem is based on the following Proposition.
\begin{proposition}\label{p:suffice}
In the notation above, for all $\epsilon\in V$ and $\Omega\in\Lambda^2V$ such that $\Omega\in \ker \theta$ and $\Omega \not\in \im \theta$, there exists $\beta\in \Lambda V$ such that
\begin{enumerate}[{\rm (A)}]
    \item \label{it:bkermu}
    $\Omega\beta = 0$,

    \item \label{it:bnotimmu}
    $\beta \not\in \im \mu_\Omega$,

    \item \label{it:bkertheta}
    $\theta\beta = 0$,

    \item \label{it:alpha}
    there exist $\alpha, \gamma \in \Lambda V$ such that $\epsilon\beta + \Omega \alpha = \theta \gamma$.
\end{enumerate}
\end{proposition}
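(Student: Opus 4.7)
The plan is to analyse the commuting pair $(\theta,\mu_\Omega)$ on $\Lambda V$. Since $\theta$ is a derivation and $\theta\Omega=0$, we have $\theta(\omega\Omega)=\theta(\omega)\Omega$, so $\theta\mu_\Omega=\mu_\Omega\theta$. Consequently each of $\ker\theta$, $\im\theta$, $\ker\mu_\Omega$, and $\im\mu_\Omega$ is preserved by both operators, and the strategy is to combine a Lefschetz-type decomposition for $\mu_\Omega$ with the nilpotent action of $\theta$ to produce the required $\beta$.

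First I would split $V=U\oplus W$ with $W:=\{v\in V:i_v\Omega=0\}$ the radical of $\Omega$. Dualising $\theta\Omega=0$ shows $\theta W\subseteq W$, so $\theta$ descends to a nilpotent derivation $\bar\theta$ on the quotient $U\cong V/W$, and $\Omega$ becomes a non-degenerate 2-form of rank $2m$ on $U$ (with $m\ge 1$ since $\Omega\neq 0$). On $(\Lambda U,\mu_\Omega)$ the Hard Lefschetz theorem exhibits a nonzero primitive middle $P^m:=\ker(\mu_\Omega|_{\Lambda^m U})$ that intersects $\im(\mu_\Omega|_{\Lambda^m U})$ trivially. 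Because $\bar\theta$ is nilpotent and commutes with $\mu_\Omega$, it preserves $P^m$, and hence $\ker\bar\theta\cap P^m\neq 0$; choose a nonzero representative $\beta_0$.

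Next I would lift $\beta_0$ to an honest $\beta\in\Lambda V$ satisfying (A), (B), (C) simultaneously, rather than merely on the associated graded. Using the decreasing filtration of $\Lambda V$ by powers of the ideal $W\cdot\Lambda V$, I would correct $\beta_0$ by successively subtracting lower-filtration terms: the commutation $\theta\mu_\Omega=\mu_\Omega\theta$ ensures that at each stage the new obstruction is again of the appropriate $\theta$- and $\mu_\Omega$-closed type and can be absorbed; meanwhile, primitivity of the leading term $\beta_0$ keeps $\beta\notin\im\mu_\Omega$ throughout.

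Finally, for (D) note that $\epsilon\beta$ lives one degree above $\beta$, which already sits in degree $\ge m$. By Hard Lefschetz $\mu_\Omega:\Lambda^{k-2}U\to\Lambda^k U$ is surjective for $k\ge m+1$, so modulo the $W$-filtration $\epsilon\beta$ lies in $\im\mu_\Omega$, and the remaining lower-filtration terms are absorbed by $\im\theta$ via the same commutation identity, yielding $\epsilon\beta+\Omega\alpha=\theta\gamma$ for suitable $\alpha,\gamma$. I expect the main obstacle to be the lifting step, since the splitting $V=U\oplus W$ is not $\theta$-invariant (only $W$ is); reconciling the Lefschetz picture on the symplectic quotient $U$ with the $\theta$-action on all of $V$ requires a careful induction up the $W$-filtration, with each stage amounting to a small cohomological extension problem---this is where I expect the bulk of the technical work to sit.
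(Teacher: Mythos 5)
Your setup (the commuting pair $(\theta,\mu_\Omega)$, reduction to the nondegenerate part of $\Omega$, and a primitive, $\theta$-closed, middle-degree class) is sound as far as conditions (A)--(C) go, and in fact easier than you fear: the \emph{support} $S$ of $\Omega$ (a canonical complement to your radical $W$) is itself $\theta$-invariant, so one can build $\beta$ inside $\Lambda S$ and no lifting up a $W$-filtration is needed. The genuine gap is in condition (D), which you dispatch in three lines but which is where essentially all of the difficulty sits. Hard Lefschetz surjectivity of $\mu_\Omega:\Lambda^{r-1}S\to\Lambda^{r+1}S$ (with $r$ the rank of $\Omega$, your $m$) settles (D) only when $\epsilon\in S$ --- that is the trivial case. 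When $\epsilon\notin S$, write $T$ for a complement of $S$; then $\epsilon\beta$ has a nonzero component in $\Lambda^{r}S\otimes\Lambda^1 T$, namely $\pm\beta\otimes\epsilon_T$, and since $\Omega\in\Lambda^2S$ and $\beta\notin\im\mu_\Omega$ by (B), \emph{no} choice of $\alpha$ can cancel that component. It must therefore be placed in $\im\theta$, and membership in $\im\theta$ is a delicate condition governed by the joint canonical form of the nilpotent Hamiltonian pair $(\theta|_S,\Omega)$; your appeal to ``absorption by $\im\theta$ via the commutation identity'' does not engage with this at all.

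Concretely, the paper has to: (i) choose $\beta$ as a product $\beta_P$ adapted simultaneously to the canonical form of $(\theta|_S,\Omega)$ \emph{and} to the orbit $\epsilon,\theta\epsilon,\theta^2\epsilon,\dots$, so $\beta$ cannot be fixed independently of $\epsilon$ as in your sketch; (ii) exhibit $\epsilon\beta_P$ as $\theta$ of an explicit telescoping sum $\sum_i(-1)^i(\theta^i\epsilon)(\theta^{M-1-i}\epsilon)\sigma$, where $M$ is the first time $\theta^M\epsilon$ enters the support of $\beta_P$ --- and this telescoping closes up only when $M$ is even, since only then does the middle term pair $\theta^{M/2}\epsilon$ with itself; (iii) treat odd $M$ by further subcases, the worst of which ($S$ spanned by one Jordan block of size $2$ and several of size $1$) forces a $\beta$ that is not of product form at all, namely $\beta=(z_2z_1-\Sigma)\lambda+z_1\nu$, together with a genuinely nonzero $\alpha=\epsilon\lambda$ and a dimension count to produce $\lambda$. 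None of these obstructions is visible in your outline, and the parity obstruction in (ii) in particular shows that a uniform ``surjectivity plus absorption'' argument cannot work as stated.
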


\begin{proof}[Proof of the Theorem assuming Proposition~\ref{p:suffice}]
Let a Lie algebra $L$ be defined as a one-dimensional central extension of a Lie algebra $W$ which has an abelian ideal $I$ of codimension $1$. We will prove that the interior multiplication by $z$ is nontrivial in $H^*(L)$, where  $L=W \oplus \mathbb{R}z$. Denote $z^* \in L^*$ a non-zero form such that $z^*(W)=0$, and denote $u^* \in W^*$ a non-zero form such that $u^*(I)=0$. Note that $dz^* \in \Lambda^2 W^*$ and we can write $dz^*=u^* \epsilon + \Omega$, where $\epsilon \in I^*, \, \Omega \in \Lambda^2 I^*$. Furthermore, $du^*=0$ and $d \phi= u^* \theta \phi$ for $\phi \in \Lambda I^*$. Note that we necessarily have $\theta \Omega = 0$.

We want to construct $\omega = z^*(u^* \alpha + \beta) + u^* \delta + \gamma$, where $\alpha, \beta, \gamma, \delta \in \Lambda I^*$, such that $\omega$ is closed, but $u^* \alpha + \beta$ is not exact. The form $\omega$ is closed if and only if conditions~\eqref{it:bkermu}, \eqref{it:bkertheta} and \eqref{it:alpha} are simultaneously satisfied. The fact that $u^* \alpha + \beta = d(z^*(u^* \phi + \psi) + u^* \eta + \rho)$ for some $\phi, \psi, \eta, \rho \in \Lambda I^*$ is equivalent to following three equations:
\begin{equation} \label{eq:notd}
\Omega \psi = \beta, \qquad \theta \psi = 0, \qquad \Omega \phi + \epsilon \psi + \theta \rho = \alpha.
\end{equation}

Now if $\Omega \in \im \theta$, say $\Omega = \theta \delta$, for some $\delta \in \Lambda I^*$, we may set $\alpha = 1, \beta = 0$, and $\gamma = - \delta$. Then conditions~\eqref{it:bkermu}, \eqref{it:bkertheta}, \eqref{it:alpha} are satisfied, but the last equation in~\eqref{eq:notd} is not, for any choice of $\phi, \psi, \rho \in \Lambda I^*$. We can therefore assume that $\Omega \in \im \theta$, and then by condition~\eqref{it:bnotimmu} the first equation in~\eqref{eq:notd} can never be satisfied. Thus the Theorem follows from Proposition~\ref{p:suffice} for $V=I^*$.
\end{proof}

The proof of Proposition~\ref{p:suffice} which we give in Section~\ref{s:proof} requires some preparation.

\subsection{Lefschetz Property and canonical forms of \texorpdfstring{$\Omega$}{\textOmega} and \texorpdfstring{$\theta$}{\texttheta}}
\label{ss:weak}

Let $V, \theta$ and $\Omega$ be as in the assumptions of Proposition~\ref{p:suffice}. The rank $r$ of $\Omega$ is defined to be the maximal number $k$ such that $\Omega^k \ne 0$ (note that $r \ge 1$ as $\Omega \not\in \im \theta$). Then $\Omega^r = v_1v_2 \dots v_{2r}$ for some linear independent $v_1, \dots, v_{2r} \in V$. This decomposition is not unique, but the subspace $S= \Span(v_1, v_2, \dots, v_{2r}) \subset V$ called the \emph{support} of $\Omega$ does not depend on a particular choice of the decomposition. We clearly have $\Omega \in \Lambda^2 S$. Furthermore, from the fact that $\theta \Omega = 0$ it follows that both $S$ and $\ker \mu_\Omega$ are $\theta$-invariant.

We will need the following fact.

\begin{lemma}[\textbf{Multilinear Lefschetz Property}] \label{l:Lefschetz}
{\ }

\begin{enumerate}[{\rm (a)}]
  \item \label{it:Lis}
  The map $\mu_\Omega: \Lambda^k S \to \Lambda^{k+2}S$ is injective for $0 \le k \le r-1$, and is surjective for $r + 1 \le k + 2 \le 2r$.

  \item \label{it:Lbij} 
  For $0 \le k \le r$, the map $\mu^k_\Omega : \Lambda^{r-k}S \to \Lambda^{r+k}S$ is a linear isomorphism.
\end{enumerate}
\end{lemma}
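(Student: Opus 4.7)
The plan is to put $\Omega$ into Darboux canonical form on its support $S$, deduce part (b) from the hard Lefschetz property for the resulting symplectic structure, and then extract part (a) as an easy consequence of (b). Since $\Omega$ has rank $r$ and $\dim S = 2r$, the normal form for nondegenerate skew-symmetric $2$-forms gives a basis $e_1,f_1,\dots,e_r,f_r$ of $S$ with $\Omega=\sum_{i=1}^{r} e_i f_i$, so $(S,\Omega)$ is symplectic.

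For (b), let $L=\mu_\Omega$ and let $D:\Lambda^\bullet S\to\Lambda^{\bullet-2}S$ be the operator defined in this Darboux basis by interior multiplication by the dual bivector $\sum_{i=1}^{r} e_i^* f_i^*$. A direct computation on monomials shows that $H:=[L,D]$ acts on $\Lambda^k S$ as the scalar $k-r$ and that $[H,L]=2L$, $[H,D]=-2D$; thus $(L,D,H)$ make $\Lambda S$ into a finite-dimensional $\mathfrak{sl}_2$-module, with $\Lambda^{r+j}S$ realized as the $(2j)$-eigenspace of $H$. The standard result that in any finite-dimensional $\mathfrak{sl}_2$-representation the $k$-th power of the raising operator is a linear isomorphism from the weight $-k$ eigenspace to the weight $+k$ eigenspace (for $k\ge 0$) then yields (b).

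To derive (a) from (b): if $0\le k\le r-1$ and $\mu_\Omega\xi=0$, then $\mu_\Omega^{r-k}\xi=0$, so $\xi=0$ by (b), giving injectivity. For surjectivity in the range $r+1\le k+2\le 2r$, write $k+2=r+j$ with $j\ge 1$; by (b) every $\eta\in\Lambda^{r+j}S$ has the form $\mu_\Omega^{j}(\xi)=\mu_\Omega(\mu_\Omega^{j-1}\xi)$ for some $\xi\in\Lambda^{r-j}S$, showing that $\mu_\Omega:\Lambda^{k}S\to\Lambda^{k+2}S$ is surjective.

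The main obstacle is the computational core of the second step, namely the verification $[L,D]=H$. This is a classical but essentially bookkeeping matter; one clean way to organize it is via the tensor decomposition $\Lambda S=\bigotimes_{i=1}^{r}\Lambda S_i$ with $S_i=\Span(e_i,f_i)$, which reduces the identity to the elementary $r=1$ case (where $S_i$ is two-dimensional and everything can be written out explicitly). As an alternative, one can avoid $\mathfrak{sl}_2$ altogether and establish (b) directly by induction on $r$ using the same tensor decomposition, expanding each element of $\Lambda S$ in its $\{1,e_r,f_r,e_rf_r\}$-components over $\Lambda S_0$ and solving the resulting linear system to invert $\mu_\Omega^k$.
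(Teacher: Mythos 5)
Your proof is correct and is precisely the standard $\mathfrak{sl}_2$ (hard Lefschetz) argument that the paper itself does not reproduce: the paper simply defers part~(b) to \cite[Proposition~1.2.30]{H} and observes that part~(a) follows from (b) by a dimension count, so you have supplied the proof the authors cite rather than a genuinely different one, and your direct derivation of (a) from (b) is also fine. One small normalization slip: since $H=[L,D]$ acts on $\Lambda^kS$ as the scalar $k-r$, the subspace $\Lambda^{r+j}S$ is the $j$-eigenspace of $H$ (not the $2j$-eigenspace), which is exactly what is needed for $L^k:\Lambda^{r-k}S\to\Lambda^{r+k}S$ to be the map from the weight $-k$ space to the weight $+k$ space in the $\mathfrak{sl}_2$-module.
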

Note that \eqref{it:Lis} follows from \eqref{it:Lbij} by the dimension count; \eqref{it:Lbij} is well known (see e.g. \cite[Proposition~1.2.30]{H}) and may be considered as an easy version of the Hard Lefschetz theorem in complex geometry, while an elementary proof of the finite characteristic $p$ version of \eqref{it:Lis} is given in \cite{CJa} and the characteristic zero result then follows by letting $p$ tend to infinity.

The following fact will be used in the proof of Proposition~\ref{p:suffice} to deduce condition~\eqref{it:bnotimmu} from condition~\eqref{it:bkermu}. Let $T$ be an (arbitrary) a linear complement of $S$ in $V$.

\begin{lemma} \label{l:Omegabeta}
Suppose $\beta \in \Lambda^{\ge r}S \otimes \Lambda T$ has a non-zero summand, say $\beta_r$, in $\Lambda^r S \otimes \Lambda T$. If $\Omega\beta = 0$, then $\beta \not\in \im \mu_\Omega$.
\end{lemma}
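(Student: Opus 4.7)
The plan is to argue by contradiction using the bigrading on $\Lambda V = \Lambda S \otimes \Lambda T$ induced by the splitting $V = S \oplus T$. The key observation is that since $\Omega \in \Lambda^2 S$, the operator $\mu_\Omega$ respects this tensor decomposition: it acts as $\mu_\Omega|_{\Lambda S} \otimes \mathrm{id}_{\Lambda T}$, so it preserves the $\Lambda T$-factor while raising the $S$-degree by $2$.

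Suppose for contradiction that $\beta = \Omega\alpha$ for some $\alpha \in \Lambda V$, and decompose $\alpha = \sum_k \alpha_k$ with $\alpha_k \in \Lambda^k S \otimes \Lambda T$. Then $\Omega\alpha_k \in \Lambda^{k+2}S \otimes \Lambda T$, so comparing the components of $\beta = \Omega\alpha$ of $S$-degree $r$ gives
\begin{equation*}
\beta_r = \Omega\, \alpha_{r-2}.
\end{equation*}
On the other hand, the hypothesis $\Omega\beta = 0$, combined with $\beta \in \Lambda^{\ge r} S \otimes \Lambda T$, has the consequence that the $S$-degree $(r+2)$ component of $\Omega\beta$ vanishes, and since no other $\beta_k$ with $k > r$ contributes to that degree, we obtain $\Omega\beta_r = 0$. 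Combining these two equations yields $\Omega^2 \alpha_{r-2} = 0$.

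Now invoke Lemma~\ref{l:Lefschetz}\eqref{it:Lbij} with $k=2$: the map $\mu_\Omega^2 : \Lambda^{r-2}S \to \Lambda^{r+2}S$ is a linear isomorphism, hence in particular injective. Since injectivity is preserved under tensoring with $\mathrm{id}_{\Lambda T}$ (the tensor product is over a field), $\mu_\Omega^2 : \Lambda^{r-2}S \otimes \Lambda T \to \Lambda^{r+2}S \otimes \Lambda T$ is injective as well. Therefore $\alpha_{r-2} = 0$, which forces $\beta_r = \Omega\alpha_{r-2} = 0$, contradicting the assumption that $\beta_r \ne 0$.

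The only mildly subtle point, and the one I would be careful to spell out, is that the $\Lambda T$-coefficients may be linearly dependent so one should not argue coefficient-by-coefficient; the clean way is to note that injectivity of a linear map between vector spaces is automatically inherited by its tensor product with any identity. The rest is a direct bookkeeping of $S$-degrees using the Hard Lefschetz-type isomorphism $\mu_\Omega^2$.
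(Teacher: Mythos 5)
Your proof is correct and follows essentially the same route as the paper: decompose by $S$-degree to isolate $\beta_r=\Omega\alpha_{r-2}$, deduce $\Omega^2\alpha_{r-2}=0$ from $\Omega\beta=0$, and conclude via Lemma~\ref{l:Lefschetz}\eqref{it:Lbij} with $k=2$. The only cosmetic difference is that you invoke preservation of injectivity under tensoring with $\mathrm{id}_{\Lambda T}$, whereas the paper writes $\alpha_{r-2}=\sum_i\sigma_i\otimes\omega_i$ with the $\omega_i$ linearly independent and kills each $\sigma_i$ separately; these are the same argument.
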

\begin{proof}
Write $\beta = \beta_r + \beta_{>r}$ with $\beta_{>r} \in \Lambda^{>r}S \otimes \Lambda T$, so that $\beta_r \ne 0$. Suppose $\Omega\beta = 0$, but $\beta \in \im \mu_\Omega$. As $\Omega \in \Lambda^2 S$, this implies $\Omega\beta_r = 0$ and $\beta_r \in \im \mu_\Omega$. We have $\beta_r = \Omega(
\sum_{i=1}^p \sigma_i \otimes \omega_i)$, where $\sigma_i \in  \Lambda^{r-2}S$ and where $\omega_i \in \Lambda T$ are linear independent. Then
$0 = \Omega \beta_r = \sum_{i=1}^p (\Omega^2\sigma_i) \otimes \omega_i$, 
and so $\Omega^2 \sigma_i = 0$, for all $i=1, \dots, p$. Then by Lemma~\ref{l:Lefschetz}\eqref{it:Lbij} with $k = 2$, we obtain $\sigma_i = 0$ for all $i=1, \dots, p$, and so $\beta_r = 0$, a contradiction.
\end{proof}

\smallskip

Another ingredient of the proof is the following canonical form for the restrictions of $\Omega$ and $\theta$ to $S$. Note that $S$ is $\theta$-invariant. Moreover, relative to a basis for $S$, the matrix of $\Omega$ is symplectic and the fact that $\theta \Omega = 0$ means that the matrix of the restriction of $\theta$ on $S$ is a (nilpotent) Hamiltonian matrix.

\begin{lemma}[{\cite[Theorem~9]{LM}}] \label{l:can}
There exists a direct sum decomposition $S=\oplus_{a=1}^p (U^a \oplus V^a)$ $\oplus \oplus_{b=1}^q Z^b$ such that $p, q \ge 0, \, p+q > 0$, and
\begin{enumerate}[{\rm (1)}]
  \item
  $\dim U^a=\dim V^a = 2l_a+1, \, l_a \ge 0$, for all $a=1, \dots, p$, and $\dim Z^b = 2m_b, \, m_b \ge 1$, for all $b=1, \dots, q$.

  \item
  For all $a=1, \dots, p, \; b=1, \dots, q$, there exist bases $\{u^a_{i_a}\}$ for $U^a$, $\{v^a_{i_a}\}$ for $V^a$ and $\{z^b_{j_b}\}$ for $Z^b$, such that
  \begin{enumerate}[{\rm (a)}]
    \item
    $\theta u^a_1=\theta v^a_1=\theta z^b_1=0$ and $\theta u^a_{i_a}=u^a_{i_a-1}, \theta v^a_{i_a} = v^a_{i_a-1}$ for $2 \le i_a \le 2l_a+1$ and $\theta z^b_{j_b}=z^b_{j_b-1}$ for $2 \le j_b \le 2m_b$.
    \item
    The $2$-vector $\Omega$ is given by 
    \begin{multline*}
      \Omega= \sum_{a=1}^p (u^a_{2l_a+1} v^a_1 - u^a_{2l_a} v^a_2 + \dots + u^a_1 v^a_{2l_a+1}) \\
      + \sum_{b=1}^q c_b(z^b_{2m_b}z_1-z^b_{2m_b-1}z_2+ \dots + (-1)^{m_b+1}z^b_{m_b+1}z_{m_b}),
    \end{multline*}
    where $c_b=\pm 1$.
  \end{enumerate}
\end{enumerate}
\end{lemma}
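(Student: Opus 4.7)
The condition $\theta\Omega=0$ for the derivation extension unpacks to $\Omega(\theta x,y)+\Omega(x,\theta y)=0$ for all $x,y\in V$. On $S$, where $\Omega|_S$ is non-degenerate (as $(\Omega|_S)^r\ne 0$), this says $\theta|_S\in\mathfrak{sp}(S,\Omega|_S)$, and the lemma is nothing but the symplectic Jordan normal form for nilpotent endomorphisms. I would prove it by induction on $r=\tfrac12\dim S$, extracting one $Z^b$-block or one $U^a\oplus V^a$-pair at a time, governed by the parity of the largest Jordan block size $n$ of $\theta|_S$.

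\textbf{Parity dichotomy.}
Let $M_n:=S/\ker\theta^{n-1}$ record the ``heads'' of maximal Jordan chains, and consider the bilinear form
\[
B([w],[w']) := \Omega(w,\theta^{n-1}w').
\]
Iterating skew-adjointness yields $\Omega(\theta^i w,\theta^j w')=(-1)^i\Omega(w,\theta^{i+j}w')$, which simultaneously shows that $B$ is well-defined on $M_n$, that $B([w'],[w])=(-1)^n B([w],[w'])$, and that the pairings $\Omega(\theta^i w,\theta^j w')$ vanish away from the anti-diagonal $i+j=n-1$. The non-degeneracy of $\Omega|_S$ forces $B\not\equiv 0$: any $u$ paired non-trivially against $\theta^{n-1}w$ (for $w$ a maximal chain top) must itself be a maximal chain top, since $\Omega(u,\theta^{n-1}w)=\pm\Omega(\theta^{n-1}u,w)$. \emph{Case $n$ even:} $B$ is symmetric; by real diagonalization pick $[w]\in M_n$ with $B([w],[w])\ne 0$, rescale to $\Omega(w,\theta^{n-1}w)=c_b\in\{\pm 1\}$, and set $z^b_j:=\theta^{n-j}w$ to obtain a $Z^b$-block of the stated form with $2m_b=n$. \emph{Case $n$ odd:} $B$ is skew, so $B([w],[w])=0$ identically; pick any $[w]\ne 0$ and $[w']$ with $B([w],[w'])=1$. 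By successively subtracting multiples of $\theta^i w$ from $w'$ to kill $\Omega(\theta^i w',\theta^j w)$ for $i+j\ne n-1$, and then further corrections from $C(w):=\Span(w,\theta w,\ldots,\theta^{n-1}w)$ to kill all $\Omega(\theta^i w',\theta^j w')$, one obtains $u^a_i:=\theta^{n-i}w$ and $v^a_i:=\theta^{n-i}w'$ giving a $U^a\oplus V^a$ pair of the stated form with $2l_a+1=n$.

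\textbf{Closing the induction.}
In either case the extracted subspace $R$ (either $C(w)$ or $C(w)\oplus C(w')$) is symplectically closed, and since $\theta$ preserves $R$ and lies in $\mathfrak{sp}$, the symplectic complement $R^\perp$ is $\theta$-invariant with $\Omega|_{R^\perp}$ non-degenerate; the induction hypothesis applied to $R^\perp$ furnishes the remaining blocks. The main technical obstacle is the Gram--Schmidt-like clearing in the $n$-odd case: the correction terms must be chosen recursively so that the pass killing $\Omega(\theta^i w',\theta^j w')$ does not disturb previously-cleared pairings $\Omega(\theta^i w',\theta^j w)$. This works because the corrections at every stage live inside $C(w)$, whose pairings with itself are already zero (as $\Omega|_{C(w)}=0$ in the odd case), so subtracting elements of $C(w)$ from $w'$ leaves the $\Omega(\theta^i w',\theta^j w)$ untouched on a second pass.
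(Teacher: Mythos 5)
The paper does not prove this lemma at all --- it is imported verbatim from \cite[Theorem~9]{LM} --- so there is no in-paper argument to compare against and your proposal must stand on its own. Its skeleton is indeed the standard route to the symplectic Jordan form: the induced form $B([w],[w'])=\Omega(w,\theta^{n-1}w')$ on $S/\ker\theta^{n-1}$, the parity dichotomy (symmetric for $n$ even, alternating for $n$ odd, which is why odd-dimensional chains must come in pairs $U^a\oplus V^a$ while even-dimensional ones occur singly with a sign $c_b$), the extraction of a $\theta$-invariant nondegenerate block $R$, and induction on its $\Omega$-complement. All of that is correct, as is the observation that nondegeneracy of $\Omega|_S$ forces $B\not\equiv 0$.

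The gap is the claim that the identity $\Omega(\theta^iw,\theta^jw')=(-1)^i\Omega(w,\theta^{i+j}w')$ makes the pairings ``vanish away from the anti-diagonal $i+j=n-1$''. It yields vanishing only for $i+j\ge n$ (where $\theta^{i+j}=0$), and, when $w'=w$, for $i+j$ even; for $i+j<n-1$ of the opposite parity nothing forces $\Omega(w,\theta^{i+j}w)$ to vanish. Concretely, in a single canonical block $z_1,\dots,z_4$ ($n=4$, $\Omega=c(z_4z_1-z_3z_2)$), the vector $w=z_4+z_2$ has $B([w],[w])=c\ne 0$ and so is admissible in your even case, yet $\Omega(w,\theta w)=2c\ne 0$, so the basis $z_j:=\theta^{4-j}w$ violates (2)(b). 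Likewise, for two paired blocks with $n=3$ and $\Omega=u_3v_1-u_2v_2+u_1v_3$, the admissible chain top $w=u_3+v_2$ has $\Omega(w,\theta w)=2\ne 0$, so your assertion that ``$\Omega|_{C(w)}=0$ in the odd case'' --- the very fact that is supposed to make your Gram--Schmidt passes non-interfering --- fails for a general admissible $w$. What is missing in both cases is a further normalization $w\mapsto p(\theta)w$ (and $w'\mapsto q(\theta)w'$) with $p(0)\ne 0$, whose coefficients are chosen recursively (essentially extracting a formal square root of the sequence $k\mapsto\Omega(w,\theta^kw')$) to annihilate the sub-anti-diagonal coefficients; this preserves the chain property (2)(a), and it is precisely the step at which the block coefficient can only be normalized to $c_b=\pm 1$ over $\mathbb{R}$ rather than to $1$. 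A lesser point worth fixing: $\Omega$ is a $2$-vector, not a bilinear form on $V$, so writing $\Omega(w,\theta^{n-1}w')$ for $w,w'\in S$ tacitly uses the isomorphism $S\cong S^*$ furnished by the nondegenerate $\Omega|_S$, under which $\theta\Omega=0$ does become skew-adjointness; this is routine but should be stated.
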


\section{Proof of Proposition~\ref{p:suffice}}
\label{s:proof}

In the assumptions and notation of Proposition~\ref{p:suffice} we choose the direct decomposition of the support $S$ of $\Omega$ and the corresponding bases in the subspaces of that decomposition as in Lemma~\ref{l:can}.

For a set $P$ of nonzero vectors $(r_a, s_a) \in \mathbb{R}^2, \; a=1, \dots, q$, define
\begin{equation} \label{eq:betaP}
\beta_P=\prod_{a=1}^p \Big((r_au^a_{l_a+1}+s_av^a_{l_a+1})\prod_{i_a=1}^{l_a}u^a_{i_a} \prod_{i_a=1}^{l_a}v^a_{i_a} \Big) \prod_{b=1}^q \Big(\prod_{j_b=1}^{m_b} z^b_{j_b}\Big).
\end{equation}
Denote $S_P$ \emph{the support of $\beta_P$}, the linear span of $x \in V$ such that $\beta_P x = 0$. Clearly $S_P \subset S$ and $\theta S_P \subset S_P$, for any choice of the set $P$.

Note that by Lemma~\ref{l:can}, for any $P$, the element $\beta_P \in \Lambda^r S \subset \Lambda V$ defined by \eqref{eq:betaP} satisfies \eqref{it:bkertheta} and \eqref{it:bkermu}, and then also \eqref{it:bnotimmu}, by Lemma~\ref{l:Omegabeta}. The main difficulty is to satisfy~\eqref{it:alpha}. In the trivial case $\epsilon = 0$, we take $\beta=\beta_P$, with any $P$, and $\alpha=0,\; \gamma = 0$. In the following we assume $\epsilon \ne 0$.


We start with two easy cases.

\begin{lemma} \label{l:easy}
{\ }

\begin{enumerate}[{\rm (1)}]
  \item \label{it:easyN} 
  Let $N \ge 1$ be such that $\theta^{N-1} \epsilon \not \in S$ and $\theta^{N} \epsilon \in S_P$, for some choice of $P$ \emph{(}it may occur that $\theta^{N} \epsilon =0$\emph{)}. Then $\beta=\epsilon (\theta \epsilon) \dots (\theta^{N-1} \epsilon) \beta_P$ satisfies conditions \emph{(}\ref{it:bkermu}--\ref{it:alpha}\emph{)}.

  \item \label{it:easyS}
  If $\epsilon \in S$ then $\beta= \beta_P$  satisfies conditions \emph{(}\ref{it:bkermu}--\ref{it:alpha}\emph{)} \emph{(}with an arbitrary choice of $P$\emph{)}.
\end{enumerate}
\end{lemma}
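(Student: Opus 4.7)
The strategy is to verify conditions (A)--(D) for each part, exploiting that $\beta_P$ by itself already satisfies (A), (B), (C), as noted in the paper right after the definition~\eqref{eq:betaP}. Thus in part (2) the only real content is (D), while in part (1) we must re-derive (A), (B), (C) for $\beta=\eta\beta_P$ and then handle (D).

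For part (2), with $\epsilon\in S$ and $\beta=\beta_P$, conditions (A), (B), (C) are already known. For (D), note that $\epsilon\beta_P\in\Lambda^{r+1}S$, and the surjectivity clause of Lemma~\ref{l:Lefschetz}\eqref{it:Lis} gives some $\alpha\in\Lambda^{r-1}S$ with $\Omega\alpha=-\epsilon\beta_P$; taking $\gamma=0$ finishes the job.

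For part (1), set $\eta=\epsilon(\theta\epsilon)\cdots(\theta^{N-1}\epsilon)$, so $\beta=\eta\beta_P$. Conditions (A) and (D) are essentially free: $\Omega$ is of even degree and $\Omega\beta_P=0$, hence $\Omega\beta=\eta\,\Omega\beta_P=0$; and $\epsilon\beta=\epsilon\eta\beta_P=0$ because $\eta$ already has $\epsilon$ as a factor, so $\alpha=\gamma=0$ suffices for (D). For (C), apply the degree-zero derivation $\theta$ to $\eta=\bigwedge_{k=0}^{N-1}\theta^k\epsilon$: the term obtained by replacing $\theta^k\epsilon$ with $\theta^{k+1}\epsilon$ vanishes for every $k<N-1$ because $\theta^{k+1}\epsilon$ is already a factor, leaving only $\theta\eta=\epsilon(\theta\epsilon)\cdots(\theta^{N-2}\epsilon)(\theta^N\epsilon)$; the hypothesis $\theta^N\epsilon\in S_P$ then gives $(\theta^N\epsilon)\beta_P=0$, whence $\theta\beta=(\theta\eta)\beta_P=0$.

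The genuine obstacle is (B). I plan to invoke Lemma~\ref{l:Omegabeta} with respect to an arbitrary linear complement $T$ of $S$ in $V$. Decomposing $\theta^k\epsilon=x_k+y_k$ with $x_k\in S,\ y_k\in T$ and expanding $\eta$ in $\Lambda S\otimes\Lambda T$, multiplication by $\beta_P\in\Lambda^r S$ gives $\beta\in\Lambda^{\ge r}S\otimes\Lambda T$, and its component of bidegree $(r,N)$ is exactly $\beta_P\otimes(y_0 y_1\cdots y_{N-1})$. The point is to show this component is nonzero. For this I reduce modulo $S$: the nilpotent operator $\bar\theta$ induced on $V/S$ (which the projection identifies with $T$) satisfies $\bar\theta^{N-1}\bar\epsilon\ne0$ (since $\theta^{N-1}\epsilon\notin S$) and $\bar\theta^N\bar\epsilon=0$ (since $\theta^N\epsilon\in S_P\subset S$). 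A standard Jordan-type argument then forces $\bar\epsilon,\bar\theta\bar\epsilon,\dots,\bar\theta^{N-1}\bar\epsilon$ to be linearly independent, so $y_0\wedge\cdots\wedge y_{N-1}\ne 0$; combined with $\beta_P\ne 0$ this delivers the required nonzero summand, and Lemma~\ref{l:Omegabeta} concludes (B).
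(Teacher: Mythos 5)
Your proof is correct and follows the same route as the paper: for part (1) the conditions \eqref{it:bkermu}, \eqref{it:bkertheta}, \eqref{it:alpha} are checked directly and \eqref{it:bnotimmu} is deduced from Lemma~\ref{l:Omegabeta}, while for part (2) condition \eqref{it:alpha} comes from the Lefschetz property of $\mu_\Omega:\Lambda^{r-1}S\to\Lambda^{r+1}S$. You merely supply details the paper leaves implicit, notably the telescoping computation of $\theta\eta$ and the linear independence of $\bar\epsilon,\dots,\bar\theta^{N-1}\bar\epsilon$ in $V/S$ needed to apply Lemma~\ref{l:Omegabeta}; both are correct.
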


\begin{proof}
  For assertion \eqref{it:easyN}, conditions (\ref{it:alpha}, \ref{it:bkermu}) and \eqref{it:bkertheta} are trivially satisfied, and then \eqref{it:bnotimmu} follows from Lemma~\ref{l:Omegabeta}.

  For \eqref{it:easyS}, the only condition to check is \eqref{it:alpha}. It is satisfied because $\epsilon \beta \in \im \mu_\Omega$ which follows from Lemma~\ref{l:Lefschetz}\eqref{it:Lbij} with $k=1$.
\end{proof}

By Lemma~\ref{l:easy} we can now assume that if $N$ is the smallest number for which $\theta^{N} \epsilon \in S$, then $N \ge 1$, and moreover, $\theta^{N} \epsilon \not\in S_P$, for any choice of $P$. Let $M > N$ be the smallest number for which there exists $P$ such that $\xi:=\theta^M \epsilon \in S_P$. Note that $\xi \ne 0$. Indeed, if it were so, the vector $\theta^{M-1}\epsilon$ would be a non-zero element of $S$ and we would have $\theta^{M-1}\epsilon = \sum_{a=1}^{p}(c_a u^a_1 + d_a v^a_1)+ \sum_{b=1}^{q} f_b z^b_1$ for some $c_a, d_a, f_b \in \mathbb{R}$, not all zeros. But then $\theta^{M-1}\epsilon \in S_P$ if we choose the elements of $P$ in such a way that $(r_a,s_a)=(c_a,d_a)$ when the latter vector is non-zero and $(r_a,s_a)=(1,0)$ otherwise; this contradicts the choice of $M$.

We can decompose $\xi \in S_P$ into the ``top" and the ``bottom" components, $\xi=\xi_T + \xi_B$, where $\xi_T \in \Span_{a=1}^p (r_au^a_{l_a+1}+s_av^a_{l_a+1}) \oplus \Span_{b=1}^q (z^b_{m_b})$ and $\xi_B \in \Span (u^a_{i_a}, v^a_{i_a}, z^b_{j_b} \, | \, i_a \le l_a,$ $j_b < m_b)$. Note that the ``top" component $\xi_T$ must be non-zero since $\theta^{M-1}\epsilon \in S \setminus S_P$.

\medskip

We consider several cases.

\underline{Case 1.} If $M$ is even, we are done. Indeed, choose $\beta=\beta_P$ (with the set $P$ used to define $M$) and let $w$ be a vector from the set $\{r_au^a_{l_a+1}+s_av^a_{l_a+1}, z^b_{m_b} \, | \, a =1, \dots, p, \; b =1, \dots, q\}$ whose coefficient in $\xi_T$ is non-zero. Then $\beta_P= c w \sigma$, where $\sigma$ is the product of all the vectors on the right-hand side of the formula \eqref{eq:betaP} for $\beta_P$ except for $w$, and $c \in \mathbb{R} \setminus \{0\}$. We have $\theta \sigma=0$ and $\beta_P=c (\theta^M \epsilon) \sigma$, and so
\begin{equation*}
c^{-1} \epsilon \beta_P = \epsilon (\theta^M \epsilon) \sigma= \theta\Big(\big(\epsilon (\theta^{M-1} \epsilon) - (\theta\epsilon) (\theta^{M-2} \epsilon) + \dots +(-1)^{M/2-1}(\theta^{M/2-1}\epsilon) (\theta^{M/2} \epsilon)\big) \sigma\Big).
\end{equation*}
Therefore $\epsilon \beta_P \in \im \theta$, hence condition \eqref{it:alpha} is satisfied.

\smallskip

\underline{Case 2.} Now consider the case when $M$ is odd. Note that by construction, $M > N \ge 1$, and so $M \ge 3$. We again take the decomposition $\xi=\xi_T+\xi_B$ as above and choose $w$ to be one of the ``top" vectors, as in the previous paragraph.

\underline{Case 2.1.} First suppose that $\theta w \ne 0$. We take $\beta=\beta_P$ and will prove that $\epsilon \beta \in \im \theta$. The proof is similar to the above, but more technical. For some non-zero $c \in \mathbb{R}$ we have $\beta_P =c w (\theta w) \sigma$, where $\sigma$ is defined as follows. If $w=z^b_{j_b}$, then $\sigma$ is the product of all the vectors on the right-hand side of \eqref{eq:betaP} except for $z^b_{j_b}$ and $z^b_{j_b-1}$. If $w= r_au^a_{l_a+1}+s_av^a_{l_a+1}$, then $\sigma$ is the product of all the vectors on the right-hand side of \eqref{eq:betaP} except for $r_au^a_{l_a+1}+s_av^a_{l_a+1}$ and $u^a_{l_a}$ if $r_a \ne 0$, and except for $r_au^a_{l_a+1}+s_av^a_{l_a+1}$ and $v^a_{l_a}$ if $r_a = 0$ (note that then $s_a \ne 0$). Similar to the above, we have $\theta \sigma = 0$ and $\epsilon \beta_P = c \epsilon (\theta^M \epsilon) (\theta^{M+1} \epsilon) \sigma$. Moreover, $(\theta^2 \xi) \sigma = 0$, that is, $(\theta^{M+2} \epsilon) \sigma = 0$. To prove that $\epsilon \beta_P \in \im \theta$ we denote $K=\frac12(M-1)$ and define
\begin{equation*}
\delta=\epsilon (\theta^{K} \rho) - (\theta \epsilon) (\theta^{K-1} \rho) + \dots + (-1)^{K} (\theta^{K}\epsilon) \rho, \quad \text{where }\rho= (\theta^M \epsilon) (\theta^{K+1} \epsilon).
\end{equation*}
Then we have
\begin{equation*}
  \theta(\delta)= \epsilon (\theta^{K+1} \rho) + (-1)^{K} (\theta^{K+1}\epsilon) \rho = \epsilon (\theta^{K+1} \rho),
\end{equation*}
and so $\theta(\delta \sigma)= \epsilon (\theta^{K+1} \rho) \sigma = \epsilon \theta^{K+1} ((\theta^M \epsilon) (\theta^{K+1} \epsilon)) \sigma$. But
$(\theta^{M+2} \epsilon) \sigma = 0$, and so $(\theta^r \epsilon) \sigma = 0$, for all $r \ge M+2$. It follows that $\epsilon \theta^{K+1} ((\theta^M \epsilon) (\theta^{K+1} \epsilon)) \sigma = K \epsilon (\theta^{M+1} \epsilon) (\theta^M \epsilon) \sigma$, and so $\theta(\delta \sigma)= -c^{-1}K \epsilon \beta_P$, as required for~\eqref{it:alpha}.

\smallskip

\underline{Case 2.2.} We now assume that $M$ is odd (recall that $M \ge 3$) and that for all the ``top" elements $w$ in the decomposition of $\xi_T$ we have $\theta w = 0$. This means that $\xi_T$ is a nonzero linear combination of some of the $r_au^a_{l_a+1}+s_av^a_{l_a+1}$ with $l_a=0$ and some of the $z^b_{m_b}$ with $m_b=1$. Recall that $\xi = \theta^M \epsilon$, and that for $N <M$ we have $\theta^N \epsilon \in S$. Therefore $\xi \in \theta S$ which implies that $\xi$ (and $\xi_T$) contain no terms $r_au^a_{l_a+1}+s_av^a_{l_a+1}$ with $l_a=0$. Then $\xi_T= \sum_{b: m_b=1} c_b z^b_1$, where $c_b \in \mathbb{R}$ and at least one of $c_b$ is non-zero. Up to relabelling we can take $c_1 \ne 0$. First assume that either $q > 1$ or there exists $1 \le a \le p$ such that $l_a \ne 0$. We again take $\beta=\beta_P$ and prove that $\epsilon \beta \in \im \theta$. Denote $\sigma$ the product of all the vectors on the right-hand side of \eqref{eq:betaP} except for $z^b_1$. Then $\theta \sigma = 0$. Define an element $\tau$ as follows. If $q>1$, replace the term $z_{m_b}^b$ in $\sigma$ by $z_{m_b+1}^b$ (note that $z_{m_b+1}^b \in S$). If $q=1$, but $l_a > 0$ for some $a=1, \dots, p$, replace the term $r_au^a_{l_a+1}+s_av^a_{l_a+1}$ in $\sigma$ by $r_au^a_{l_a+2}+s_av^a_{l_a+2}$ (note that $r_au^a_{l_a+2}+s_av^a_{l_a+2} \in S$). The resulting element $\tau$ contains no $z^1_j$ and has the property that $\theta \tau = \sigma$. Note that $\theta^M \epsilon (= \xi) = c_1z^1_1 +\sum_{b: m_b=1, b \ne 1} c_b z^b_1 + \phi$, where $\phi$ is a linear combination of the ``lower terms", $u^a_{i_a}$ and $v^a_{i_a}$ with $i_a \le l_a$ and $z^b_{j_b}$ with $j_b < m_b$. It follows that
\begin{equation} \label{eq:sigmatau}
  (\theta^M \epsilon) \sigma = c_1 \beta, \qquad (\theta^{M+1} \epsilon) \tau = 0.
\end{equation}

To prove that $\epsilon \beta_P \in \im \theta$ we denote $K=\frac12(M-1) \ge 1$ and define
\begin{multline*}
\delta=K\epsilon (\theta^{M-1} \epsilon) - (K-1)(\theta \epsilon) (\theta^{M-2} \epsilon) + \dots \\ + (-1)^{K-2} 2 (\theta^{K-2}\epsilon) (\theta^{K+2}\epsilon) + (-1)^{K-1} (\theta^{K-1}\epsilon) (\theta^{K+1}\epsilon),
\end{multline*}
Then we have
\begin{equation*}
  \theta(\delta)= (K+1)\epsilon (\theta^{M} \epsilon) - \rho, \quad \text{where} \; \; \rho= \epsilon (\theta^{M} \epsilon) - (\theta \epsilon) (\theta^{M-1} \epsilon) + \dots + (-1)^{K-1} (\theta^{K}\epsilon) (\theta^{K+1}\epsilon).
\end{equation*}
Note that $\theta \rho = \epsilon \theta^{M+1}\epsilon$ and so from \eqref{eq:sigmatau} we obtain
\begin{equation*}
  \theta(\delta \sigma + \rho \tau)= ((K+1)\epsilon (\theta^{M} \epsilon) - \rho)\sigma + \epsilon (\theta^{M+1}\epsilon) \tau + \rho \sigma = c_1 (K+1) \beta,
\end{equation*}
as required for~\eqref{it:alpha}.

\smallskip

\underline{Case 2.3.} In this last remaining case we have $q=1$ and $m_1=1$, and $l_a=0$, for all $a=1, \dots, p$, so that $S = \Span(u^1_1, v^1_1, \dots, u^p_1, v^p_1, z^1_2, z^1_1)$. In what follows we drop the subscript $1$ in $u^a_1$ and $v^a_1$ and the superscript $1$ in $z^1_2$ and $z^1_1$. From Lemma~\ref{l:can} we have
\begin{equation*}
  \Omega = z_2 z_1 + \sum_{a=1}^{p} u^av^a
\end{equation*}
(up to the sign). Furthermore, for some $N>0$ and some odd $M>N$ we have
\begin{equation*}
\theta^N \epsilon \in S, \qquad \theta^{N-1} \epsilon \not\in S, \qquad \theta^M \epsilon = z_1
\end{equation*}
(up to multiplying $\epsilon$ by a non-zero number). It follows that $M=N+1$ and that
\begin{equation*}
 \theta^{M-1} \epsilon = z_2 + c z_1 + \phi, \quad \text{where} \; \; \phi = \sum (r_a u_a + s_a v_a),
\end{equation*}
for some $c, r_a, s_a \in \mathbb{R}$. We can also assume that $p > 0$ as otherwise $\Omega = \theta ((\theta^{M-2} \epsilon) z_1)$ (contradicting the assumption that $\Omega \not\in \im \theta$).

Our construction for $\beta$ will be different from what we had before. Denote $S'= \Span(u^1, v^1, \dots, u^p, v^p)$ and $\Sigma=\sum_{a=1}^{p} u^av^a \in \Lambda^2 S'$, so that $\Omega=z_2z_1+\Sigma$. Now define
\begin{equation*}
  \beta=(z_2z_1- \Sigma) \lambda + z_1 \nu,
\end{equation*}
where
\begin{equation}\label{eq:lambdanu}
\lambda \in \Lambda^{p-1}S' \setminus \{0\}, \; \Sigma^2 \lambda = 0, \qquad \nu \in \Lambda^{p}S', \; \Sigma \nu = 0
\end{equation}
(the proof of existence of such elements and their concrete choice we postpone to a little later). As $\theta z_2 = z_1$ and $\theta z_1= \theta u^a=\theta u^b=0$, condition~\eqref{it:bkertheta} is satisfied for our $\beta$. Condition~\eqref{it:bkermu} follows from~\eqref{eq:lambdanu}, and then \eqref{it:bnotimmu} follows by Lemma~\ref{l:Omegabeta}. Furthermore, taking $\alpha= \epsilon \lambda$ we have
\begin{align*}
  \epsilon \beta + \Omega \alpha &= \epsilon (2z_2z_1 - \Omega) \lambda + \epsilon z_1 \nu + \Omega \epsilon \lambda = 2\epsilon (\theta^{M-1} \epsilon - \phi)z_1 \lambda + \epsilon z_1 \nu \\
  &=  2\epsilon (\theta^{M-1} \epsilon) (\theta^M \epsilon) \lambda + \epsilon z_1 (2\phi \lambda + \nu).
\end{align*}
But $\epsilon (\theta^{M-1} \epsilon) (\theta^M \epsilon) \lambda \in \im \theta$: using the fact that $\theta^{M+1} \epsilon = \theta \lambda=0$ (and by  calculations similar to those in Case 2.1) we can check that $\epsilon (\theta^{M-1} \epsilon) (\theta^M \epsilon) \lambda = \theta (\delta \lambda)$, where
\begin{equation*}
\delta = \sum_{i=0}^{K-1}(-1)^{i+1} (\theta^{K-1-i}\rho) (\theta^i\epsilon) \quad \text{and} \; \; K=\frac12(M-1), \; \rho = (\theta^M \epsilon)(\theta^K \epsilon).
\end{equation*}
So with our choice of $\beta$ and $\alpha$, condition~\eqref{it:alpha} will be satisfied provided $\nu=-2\phi \lambda$. Substituting this into~\eqref{eq:lambdanu} we obtain that to conclude the proof we have to construct a non-zero $\lambda \in \Lambda^{p-1}S'$ such that $\Sigma^2 \lambda = \Sigma \phi \lambda = 0$. Note that multiplication $\mu_\Sigma$ by $\Sigma$ is a linear isomorphism from $\Lambda^{p-1}S'$ to $\Lambda^{p+1}S'$ by Lemma~\ref{l:Lefschetz}\eqref{it:Lbij}, so it is sufficient to find a non-zero $\eta (=\Sigma \lambda) \in \Lambda^{p+1}S'$ such that $\Sigma \eta = \phi \eta = 0$.

If $p=1$ we take $\eta = \Sigma$. Let $p>1$. If $\phi = 0$ we use the fact that $\mu_\Sigma$ is a surjective map from $\Lambda^{p+1}S'$ to $\Lambda^{p+3}S'$ by Lemma~\ref{l:Lefschetz}\eqref{it:Lis}. Comparing the dimensions we find that it has a nontrivial kernel, so there exists a non-zero $\eta \in \Lambda^{p+1}S'$ such that $\Sigma \eta = 0$. If $\phi \ne 0$, we take $\eta = \phi \zeta$ with $\zeta \in \Lambda^{p}S'$, where $\zeta \not\in \phi \Lambda^{p-1}S'$ and $\Sigma \zeta \in \phi \Lambda^{p+1}S'$. But now $\dim (\phi \Lambda^{p-1}S') = \dim \Lambda^{p-1}(S'/\phi) = \binom{2p-1}{p-1}$ and, as $\mu_\Sigma:\Lambda^{p}S' \to \Lambda^{p+2}S'$ is surjective by Lemma~\ref{l:Lefschetz}\eqref{it:Lis}, we have
\begin{multline*}
\dim \{\zeta \in \Lambda^{p}S' \, | \, \Sigma \zeta \in \phi \Lambda^{p+1}S'\} = \dim \ker(\mu_\Sigma:\Lambda^{p}S' \to \Lambda^{p+2}S') + \dim (\phi \Lambda^{p+1}S')\\
= \dim \Lambda^{p}S' - \dim \Lambda^{p+2}S' + \dim \Lambda^{p+1}(S'/\phi)= \binom{2p}{p}-\binom{2p}{p+2}+ \binom{2p-1}{p+1}.
\end{multline*}
So $\dim \{\zeta \in \Lambda^{p}S' \, | \, \Sigma \zeta \in \phi \Lambda^{p+1}S'\}-\dim (\phi \Lambda^{p-1}S')=\frac{3}{p+2}\binom{2p}{p+1} > 0$ concluding the proof.

\bigskip
\noindent \emph{Acknowledgement.} The first author would like to thank the members of the Department of Mathematics and Statistics at the University of Ottawa for their hospitality during his stay there. The second author would like to thank the members of the Department of Mathematics and Statistics at La Trobe University for their very considerate hospitality during his many visits there.

\bibliographystyle{amsplain}
\bibliography{props3}

\end{document}